\newcommand{\be} {\begin{equation}}
\newcommand{\ee} {\end{equation}}
\newcommand{\bea} {\begin{eqnarray}}
\newcommand{\eea} {\end{eqnarray}}
\newcommand{\Bea} {\begin{eqnarray*}}
\newcommand{\Eea} {\end{eqnarray*}}
\newcommand*{\X}{\mathcal{X}}
\newcommand*{\M}{\mathcal{M}}
\renewcommand*{\L}{\mathcal L}
\newcommand{\1}{\mathbf 1}
\newcommand*{\N}{\mathbb{N}}
\newcommand*{\B}{\mathcal{B}}
\newcommand*{\R}{\mathbb{R}}
\newcommand*{\e}{\mathrm{e}}
\newcommand{\dis}{\displaystyle}
\newtheorem{theo}{Theorem}[section]
\newtheorem{lem}[theo]{Lemma}
\newtheorem{hyp}{Hypothesis}
\numberwithin{equation}{section}
\begin{document}

\title[On irreducibility and ergodicity of nonconservative semigroups]{On an irreducibility type condition for the ergodicity of nonconservative semigroups}

\author{
	Bertrand \textsc{Cloez}
		\and
	Pierre \textsc{Gabriel}
}
\def\runauthor{
	Bertrand \textsc{Cloez} \and Pierre \textsc{Gabriel}
}

\date{}
    \address[B. \textsc{Cloez}]{MISTEA, INRAE, Montpellier SupAgro, Univ. Montpellier, 2 place Pierre Viala, 34060 Montpellier, France.}
    \email{bertrand.cloez@inrae.fr}
    \address[P. \textsc{Gabriel}]{Laboratoire de Math\'ematiques de Versailles, UVSQ, CNRS, Universit\'e Paris-Saclay,  45 Avenue des \'Etats-Unis, 78035 Versailles cedex, France.}
    \email{pierre.gabriel@uvsq.fr}

\begin{abstract}
We propose a simple criterion, inspired from the irreducible aperiodic Markov chains, to derive the exponential convergence of general positive semigroups.
When not checkable on the whole state space, it can be combined to the use of Lyapunov functions.
It differs from the usual generalization of irreducibility and is based on the accessibility of the trajectories of the underlying dynamics.
It allows to obtain new existence results of principal eigenelements, and their exponential attractiveness, for a nonlocal selection-mutation population dynamics model defined in a space-time varying environment.
\end{abstract}

\keywords{Positive semigroups; measure solutions; ergodicity; Krein-Rutman theorem; evolutionary model}

\subjclass[2010]{Primary 47A35; Secondary 35B40, 47D06, 60J80, 92D15, 92D25}

\maketitle

\section{The main result}

We are interested in the long time behavior of positive semigroups in \emph{weighted signed measures} spaces.
Before dealing with these semigroups, let us start by defining more precisely what we mean by weighted signed measures.

\

Let $\X$ be a measurable space.
We work in weighted signed measures spaces on $\X$.
More precisely for a weight function $V:\X\to(0,\infty)$ we denote by $\M_+(V)$ the set of positive measures on $\X$ which integrate $V$
and we define the space of weighted signed measures as
\[\M(V)=\M_+(V)-\M_+(V)\]
the smallest vector space with positive cone $\M_+(V)$, see~\cite{BCGM} for a rigorous construction as a quotient space.
Basically, an element $\mu$ of $\M(V)$ is the difference $\mu=\mu_+-\mu_-$ of two positive measures $\mu_+,\mu_-\in\M_+(V)$ which are mutually singular (Hahn-Jordan decomposition).
It acts on the Banach space
\[\B(V)=\Big\{f:\X\to\R\ \text{measurable},\ \left\|f\right\|_{\B(V)}:=\sup_{x\in\X} |f(x)| /V(x)<+\infty\Big\}\]
through
\[\mu(f)=\int_\X f\,d\mu_+-\int_\X f\,d\mu_-.\]
The space $\M(V)$ is a Banach space for the weighted total variation norm
\[\left\|\mu\right\|_{\M(V)}=\sup_{\|f\|_{\B(V)}\leq1}\mu(f).\]
Notice that, with these definitions, the standard total variation norm is $\left\|\cdot\right\|_{\rm{TV}}=\left\|\cdot\right\|_{\M(\1)}$, where $\1$ stands for the mapping $x\mapsto 1$.

\

We consider positive semigroups $(M_t)_{t\geq0}$ acting on $\M(V)$ on the left ($\mu\mapsto\mu M_t$) and on $\B(V)$ on the right ($f\mapsto M_tf$),
which enjoy the classical duality relation $\mu(M_tf)=(\mu M_t)(f)$.
We present sufficient irreducibility type conditions which, possibly combined to Lyapunov type conditions, ensure the so-called \emph{asynchronous exponential behavior} of the semigroup:
\[\mu M_t\sim \e^{\lambda t}\mu(h)\gamma\qquad\text{as}\ t\to\infty,\]
where $\lambda\in\R$, $h$ is a positive function, and $\gamma$ is a positive measure.

\

We start with the global case, {\it i.e.} conditions verified on the whole state space $\X$.

\begin{hyp}
\label{ass:global}
The weight function is given by $V=\1$.
There exist $\tau>0$ and $C\geq1$ such that $C^{-1}\leq M_s\1\leq C$ for all $s\in[0,\tau]$,
and there exist $c>0$ and a family of probability measures $(\sigma_{x,y})_{x,y\in \X}$ over $[0,\tau]$ such that
\begin{enumerate}[label=(H\arabic*), parsep=4mm]
\item \label{H:irr}
\makebox[.83\textwidth][c]{
$\dis\forall x,y \in \X, \qquad \delta_x M_{\tau}(\cdot) \geq c \int_0^\tau \delta_y M_{\tau-s} (\cdot) \,\sigma_{x,y}(ds)$,
}
\item \label{H:aper}
\makebox[.83\textwidth][c]{
$\dis\sup_{x,x'\in \X} \inf_{y \in \X} \left\| \sigma_{x,y} - \sigma_{x',y} \right\|_{\rm{TV}} <2$.
}
\end{enumerate}
\end{hyp}

\

Let us do some comments on these two assumptions.

\smallskip

Condition~\ref{H:irr} ensures the existence of a time $\tau$ such that, with positive probability and uniformly with respect to any initial positions $x$ and $y$ in $\X$,
the trajectories issued from $x$ intersect at time $\tau$ the trajectories issued from $y$ at some random times $s\in[0,\tau]$.
Even though they are not comparable in general, this \emph{path crossing} condition has connections with the notion of irreducibility of Markov processes, which means basically that for any $x$ and $y$ there is a deterministic time at which the trajectories issued from $x$ reach $y$ with positive probability~\cite{Norris}.

Consider for instance the semigroup $(M_{t})_{t\geq0}$ of a time continuous finite Markov chain $(X_t)_{t\geq 0}$:
$$
M_t f(x) = \mathbb{E}[f(X_t) \ | \ X_0=x].
$$
Irreducibility induces the existence of a random time $T(x,y)$ such that $X_{T(x,y)} = y$ and the strong Markov property gives
\begin{align*}
M_t f(x) \geq \mathbb{E}[f(X_t) \mathbf{1}_{T(x,y) \leq t} \ | \ X_0=x] 
= \mathbb{E}[M_{t-T(x,y)} f(y) \mathbf{1}_{T(x,y) \leq t}  \ | \ X_0= x]. 
\end{align*}
Assumption~\ref{H:irr} then holds with $\sigma_{x,y}$ being the law of $T(x,y)$ conditioned on being positive.

For Markov processes on infinite state spaces however, irreducibility does not imply~\ref{H:irr} in general, due to the requirement in~\ref{H:irr} for the probability of intersection and the crossing maximal time $\tau$ to be uniform in $x$ and~$y$.
Actually there are few examples of Markov processes on infinite state spaces which are irreducible since in most  cases the probability of reaching a point $y$ at a deterministic time starting from an arbitrary position $x$ is zero.
Similarly, the concept of irreducibility for positive semigroups, see~\cite[Definition C-III.3.1 p.306]{Nagel86} for instance, which proves very useful when working in spaces of functions, is rarely applicable in spaces of measures, exactly for the same reasons as for Markov processes.
Condition~\ref{H:irr} can then be seen as a useful alternative to irreducibility since it allows to cover many more interesting examples.

Consider for instance a Markov process on a compact topological state space $\X$.
If the random hitting time $T(x,y)$ has a positive density $\mathfrak{s}(x,y, \cdot)$ with respect to the Lebesgue measure which is continuous on its three components,
then the associated semigroup satisfies~\ref{H:irr} and~\ref{H:aper}.
A simple and typical example satisfying \ref{H:irr} but not \ref{H:aper} is given by the periodic (and irreducible) semigroup $(M_t)_{t\geq0}$ defined for every $f\in \B(\1)$, $t\geq 0$ and $x\in \X=[0,1]$ by
$$
M_tf(x) = f\left( x+t - \lfloor x+t \rfloor \right),
$$
where $\lfloor \cdot \rfloor$ stands for the integer part.
For this semigroup, \ref{H:irr} is satisfied only with $(\sigma_{x,y})_{0\leq x,y\leq1}$ a family of Dirac masses, so that~\ref{H:aper} does not hold.
A more sophisticated example (still irreducible) can be found in~\cite{BDG}, see also the comments in Section~\ref{sec:app}.

Condition~\ref{H:aper} is thus an aperiodicity assumption, reminiscent from the aperiodicity of Markov chains.
It can also be seen as a coupling condition since it means that, uniformly in $x$ and $x'$ in $\X$, the trajectories issued from $x$ and $x'$ intersect at time $\tau$ with positive probability (the intersection point being on a trajectory issued from some $y\in\X$ in the time interval $[0,\tau]$).

\

In Hypothesis~\ref{ass:global} the conditions are satisfied on the whole state space, which can be too much to ask in many applications, typically when the state space is not compact.
In that case Assumptions~\ref{H:irr} and~\ref{H:aper} can be localized through the use of Lyapunov functions, in the spirit of~\cite{BCGM,CV18}.
It leads to the following set of assumptions where, for two functions $f,g:\Omega\to\R$, the notation $f\lesssim g$ means that $f\leq C g$ on $\Omega$ for some $C>0$.

\begin{hyp}
\label{ass:local}
There exists a function $\psi:\X\to(0,\infty)$, a subset $K\subset\X$, a time $\tau>0$, and constants $\beta>\alpha>0$ and $\theta>0$ such that
\medskip
\begin{enumerate}[label=(A\arabic*), parsep=4mm, start = 0]
\item \label{A0} $\psi\leq V$ on $\X$ \quad \text{and} \quad $V\lesssim \psi  \ \ \text{on } K$; \quad $M V\lesssim V  \quad\text{and}\quad M \psi  \gtrsim \psi\quad \text{on}\ [0,\tau]\times\X,$
\item \label{A1} \makebox[.83\textwidth][c]{$M_\tau V \leq \alpha V + \theta  \mathbf{1}_K \psi$,}
\item \label{A2} \makebox[.83\textwidth][c]{$M_\tau \psi \geq \beta\psi$,}
\end{enumerate}
\bigskip
and there exist $c>0$ and a family of probability measures $(\sigma_{x,y})_{x,y\in K}$ over $[0,\tau]$ such that
\medskip
\begin{enumerate}[label=(H\arabic*'), parsep=4mm]
\item \label{H:irrloc} for all $f\in\B(V)$, $f\geq0$, and all $x,y\in K$,
\[\frac{M_{\tau} f(x)}{\psi(x)} \geq c \int_0^\tau \frac{M_{\tau-s} f(y)}{\psi(y)} \,\sigma_{x,y}(ds),\]
\item \label{H:aperloc}
\makebox[.83\textwidth][c]{
$\dis\sup_{x,x'\in K} \inf_{y \in K} \int_0^\tau\frac{M_{\tau-s}(\psi\1_K)(y)}{\psi(y)}(\sigma_{x,y}\wedge\sigma_{x',y})(ds)>0$.
}
\end{enumerate}
\end{hyp}

\

We are now ready to state our main result.

\begin{theo}
\label{th:main}
Assume that Hypothesis~\ref{ass:global} or Hypothesis~\ref{ass:local} is verified.
Then there exists a unique triplet $(\gamma,h,\lambda)\in\M_+( V)\times \B_+( V)\times \mathbb{R}$ of eigenelements of $(M_t)_{t\geq 0}$ with $\gamma (h) =\|h\|_{\B(V)}= 1$,
{\it i.e.} satisfying for all $t\geq0$
\begin{equation*}
\label{vecteursales}
\gamma M_t=\e^{\lambda t}\gamma\qquad\text{and}\qquad M_th=\e^{\lambda t}h.
\end{equation*}
Moreover,  there exist $C,\omega>0$ such that for all $t\geq0$ and  $\mu \in \M( V)$, 
\begin{equation}\label{eq:conv_norm}
\big\|\e^{-\lambda t}\mu M_t-\mu(h)\gamma\big\|_{\M( V)}\leq C\left\|\mu\right\|_{\M( V)}\e^{-\omega t}.
\end{equation}
\end{theo}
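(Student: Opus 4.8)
The plan is to pass to the discrete-time operator $P:=M_\tau$, extract from \ref{H:irrloc}--\ref{H:aperloc} a Doeblin-type coalescence estimate on the small set $K$, feed it together with the Foster--Lyapunov bounds \ref{A1}--\ref{A2} into a non-conservative Harris-type coupling argument (in the spirit of~\cite{BCGM,CV18}), and return to continuous time at the end. Two preliminary reductions: Hypothesis~\ref{ass:global} is the particular case $V=\psi=\1$, $K=\X$ of Hypothesis~\ref{ass:local} --- the two-sided bound $C^{-1}\leq M_s\1\leq C$ supplies~\ref{A0} and lets one choose $\alpha<\beta:=C^{-1}$ and $\theta:=C$ so that \ref{A1}--\ref{A2} hold --- so we may assume Hypothesis~\ref{ass:local}; and everything is proved for $P$ on $\M(V)$, the continuous-time statements being recovered by writing $t=n\tau+r$ with $r\in[0,\tau)$ and absorbing $M_r$ via $MV\lesssim V$, $M\psi\gtrsim\psi$ on $[0,\tau]\times\X$ from~\ref{A0}. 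Indeed, once $(\gamma,h,\lambda)$ satisfies $\gamma P=\e^{\lambda\tau}\gamma$ and $Ph=\e^{\lambda\tau}h$, the measure $\e^{-\lambda t}\gamma M_t$ is a nonnegative fixed vector of $P$, hence equal to $\gamma$ up to a factor $\chi(t)$ by uniqueness; $\chi$ is a measurable multiplicative cocycle with $\chi(\tau)=1$, so $\chi\equiv1$ and $\gamma M_t=\e^{\lambda t}\gamma$, and likewise $M_th=\e^{\lambda t}h$.

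The heart of the matter is a \emph{pairwise} minorization on $K$: for every $x,x'\in K$ there is a nonnegative measure $\nu_{x,x'}\in\M_+(V)$ with
\[\frac{Pf(x)}{\psi(x)}\ \geq\ \nu_{x,x'}(f),\qquad \frac{Pf(x')}{\psi(x')}\ \geq\ \nu_{x,x'}(f)\quad(f\geq 0),\qquad \nu_{x,x'}(\psi\1_K)\ \geq\ \eps_0>0,\]
with $\eps_0$ independent of the pair. This follows by choosing a common intermediate point $y=y(x,x')\in K$ and applying~\ref{H:irrloc} to both $x$ and $x'$, which gives the first two inequalities with
\[\nu_{x,x'}\ :=\ \frac{c}{\psi(y)}\int_0^\tau \delta_y M_{\tau-s}\ (\sigma_{x,y}\wedge\sigma_{x',y})(ds)\]
(an element of $\M_+(V)$ because $MV\lesssim V$; the inequalities hold since $\sigma_{x,y}\wedge\sigma_{x',y}$ is dominated by $\sigma_{x,y}$ and by $\sigma_{x',y}$), the uniform mass bound $\nu_{x,x'}(\psi\1_K)\geq c\,\eps_0$ being exactly what~\ref{H:aperloc} furnishes. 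What distinguishes this from the usual generalizations of irreducibility, and why no single fixed small measure is available, is that $\nu_{x,x'}$ genuinely depends on the pair $(x,x')$; this is nevertheless harmless for a coupling argument.

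Rescaling by $\widehat P:=\beta^{-1}P$, condition~\ref{A2} becomes $\widehat P\psi\geq\psi$, \ref{A1} the excursion bound $\widehat P V\leq\tfrac\alpha\beta V+\tfrac\theta\beta\1_K\psi$ with $\tfrac\alpha\beta<1$, and the previous paragraph a pairwise Doeblin estimate on $K$. One then runs the non-conservative Harris coupling: two copies of the $\psi$-tilted, $\beta$-rescaled dynamics issued from arbitrary $\mu,\nu\in\M_+(V)$ return to $K$ with a $V$-weighted mass that decays geometrically (by the excursion bound), and at each visit to $K$ coalesce a fixed fraction $\geq c\,\eps_0$ of their shared $\psi$-mass (by the pairwise minorization). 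This produces the eigentriplet --- $\gamma$ as the limit of a normalized orbit $\widehat P^{\,n}\delta_{x_0}$, $h\in\B_+(V)$ by the dual argument, and $\e^{\lambda\tau}\in[\beta,\alpha+\theta]$ --- together with constants $C',\omega'>0$ for which $\|\e^{-\lambda n\tau}\mu P^{\,n}-\mu(h)\gamma\|_{\M(V)}\leq C'\e^{-\omega' n}\|\mu\|_{\M(V)}$ for all $\mu\in\M(V)$; uniqueness of the normalized triplet $\gamma(h)=\|h\|_{\B(V)}=1$ follows from the same estimate, and~\eqref{eq:conv_norm} then results from the continuous-time reduction above.

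The step I expect to be the main obstacle is the execution of this coupling for a \emph{non-conservative} semigroup: total mass is not preserved and the two coupled copies carry fluctuating $\psi$-weights, so one cannot normalize by mass and must work throughout with the $\beta$-rescaled, $\psi$-weighted flow --- verifying both that the coalescing fraction $c\,\eps_0$ survives this weighting (which is precisely why~\ref{H:aperloc} carries the factor $M_{\tau-s}(\psi\1_K)(y)/\psi(y)$ rather than a bare total-variation overlap) and that it closes against the geometric excursion bound from~\ref{A1}. Pinning down the renormalization $\e^{\lambda\tau}\in[\beta,\alpha+\theta]$ and checking that the limits defining $\gamma$ and $h$ are nontrivial elements of $\M_+(V)$ and $\B_+(V)$ are the remaining technical points, the rest being routine.
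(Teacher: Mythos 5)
Your extraction of the pairwise Doeblin minorization from \ref{H:irrloc}--\ref{H:aperloc} is exactly the paper's: they too pick, for each pair $x,x'\in K$, a point $y=y(x,x')$ realizing (up to $\eps$) the infimum in \ref{H:aperloc}, form a small measure proportional to $\int_0^\tau\delta_y M_{\tau-s}(\psi\1_K\,\cdot)\,(\sigma_{x,y}\wedge\sigma_{x',y})(ds)/\psi(y)$, and then feed the resulting pair-dependent minorization into the non-conservative Harris machinery of \cite{BCGM} (noting, as you do, that the dependence of the coupling measure on $(x,x')$ is harmless). The discrete-time reduction and the remark that Hypothesis~\ref{ass:global} reduces to the local setting with $K=\X$, $\psi=V=\1$ are likewise consistent with the paper.

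There is, however, a genuine gap. The theorem of \cite{BCGM} you invoke needs, besides the Lyapunov pair \ref{A1}--\ref{A2} and the Doeblin minorization, a fourth hypothesis (their (A4)): a comparison $\frac{M_t\psi(x)}{\psi(x)}\geq d\,\frac{M_t\psi(y)}{\psi(y)}$ holding for some $d>0$ \emph{uniformly in $x,y\in K$ and in $t\geq0$}. This is not a formality in the non-conservative setting: without it the two coupled copies accrue $\psi$-mass at rates that may drift apart over successive excursions, so the fraction $c\,\eps_0$ of ``shared $\psi$-mass'' coalesced at each return to $K$ is not a fixed fraction of either copy, and the contraction does not close. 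You correctly identify the fluctuating weights as the main obstacle but supply no estimate to control them. The paper spends half its proof on precisely this point: applying \ref{H:irrloc} a second time, with $f=M_{t-\tau}\psi$ for arbitrary $t>\tau$, gives $\frac{M_t\psi(x)}{\psi(x)}\geq c\int_0^\tau\frac{M_{t-s}\psi(y)}{\psi(y)}\,\sigma_{x,y}(ds)$, and the ratio $M_{t-s}\psi(y)/M_t\psi(y)$ is then bounded below via \ref{A0} together with the bound $\frac{M_{k\tau}V}{M_{k\tau}\psi}\leq(\alpha/\beta)^k\frac{V}{\psi}+\frac{\theta}{\beta-\alpha}$ obtained from \ref{A1}--\ref{A2} as in \cite[Lemma~4.1]{BCGM}, using $V\lesssim\psi$ on $K$. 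This second, qualitatively different use of \ref{H:irrloc} --- applied to iterates of $\psi$ rather than to a test function, and for all times $t$ --- is the missing ingredient; once it is added, your strategy coincides with the paper's.
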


\

In~\cite{BCGM}, the authors give a set Assumption~{\bf A} of conditions which they prove to be equivalent to the convergence~\eqref{eq:conv_norm}.
These conditions are the same as Hypothesis~\ref{ass:local} where~\ref{H:irrloc} and~\ref{H:aperloc} are replaced by a Doeblin type coupling condition (A3) and an assumption (A4) which is basically a control on the ratio $M_t\psi(x)/M_t\psi(y)$ uniform in $x,y\in K$ and $t\geq0$.
The main advantage of Hypothesis~\ref{ass:local} compared to Assumption~{\bf A} in~\cite{BCGM} is that \ref{H:irrloc} is much easier to check than~(A4) on examples.
It will be illustrated on an example coming from evolutionary biology in Section~\ref{sec:app}.

The idea for proving Theorem~\ref{th:main} is to show that~\ref{H:irrloc} implies a slightly stronger control than~(A4) and, combined with~\ref{H:aperloc}, a slightly weaker version of~(A3).
Then we verify that the proof in~\cite{BCGM} can be adapted to these small variations of~(A3)-(A4).

Note that, contrary to Assumption~{\bf A} in~\cite{BCGM}, Hypothesis~\ref{ass:local} is sufficient but not necessary for ensuring the convergence~\eqref{eq:conv_norm},
see for instance the comments on the example in Section~\ref{sec:app}.
It is similar to the situation of finite Markov chains for which irreducibility and aperiodicity are a powerful sufficient but not necessary condition for proving convergence to an equilibrium.
Perron-Frobenius theory states that it becomes equivalent to the convergence only by adding the assumption that the Markov chain is undecomposable, in the sense that it has only one recurrent class.

\medskip

\begin{proof}[Proof of Theorem~\ref{th:main}]
We will prove that Hypothesis~\ref{ass:local} implies the existence of a constant $d>0$ such that for all $x,y$ in $K$ and all $t\geq0$
\begin{equation}\label{A4fort}
\frac{M_t\psi(x)}{\psi(x)}\geq d\,\frac{M_t\psi(y)}{\psi(y)},
\end{equation}
and the existence of a constant $\tilde c>0$ and a family $(\nu_{x,x'})_{x,x'\in K}$ of probability measures such that for all $x,x'$ in $K$ and all $y\in\{x,x'\}$
\begin{equation}\label{A3faible}
\frac{\delta_yM_\tau(\psi\,\cdot)}{M_\tau\psi(y)}\geq\tilde c\,\nu_{x,x'},
\end{equation}
and that Hypothesis~\ref{ass:global} implies these same inequalities with $K$ replaced by $\X$ and $\psi=\1$.
Such estimates, which first appeared in~\cite{CV14}, are a powerful tool to derive the exponential ergodicity of nonconservative semigroups, as attested by the further developments in~\cite{BCG17,BCGM,CV18}.

When~\eqref{A4fort} and~\eqref{A3faible} are verified on $\X$ with $\psi=\1$, and $t\mapsto\|M_t\1\|_\infty$ is locally bounded, we can use the proof of~\cite[Theorem~3.5]{BCG17} to get~\eqref{eq:conv_norm} with $V=\1$.
Indeed~\eqref{A4fort} readily implies~\cite[Assumption~(H2)]{BCG17} and~\eqref{A3faible} is similar to~\cite[Assumption~(H1)]{BCG17} with the difference that the coupling measure $\nu$ is independent of $x,x'$.
But it is not an issue since the proof in~\cite{BCG17} also works with a family $(\nu_{x,x'})$.
This was already noticed in~\cite{CV14}, where Assumption~(A1') is the analogue to~\eqref{A3faible}.

When~\eqref{A4fort} and~\eqref{A3faible} are verified in $K\subset\X$ and $\psi$ is as in Hypothesis~\ref{ass:local}, we can use the proof in~\cite{BCGM} to obtain~\eqref{eq:conv_norm}.
Similarly as above, \eqref{A4fort} implies~\cite[Assumption~(A4)]{BCGM} and~\eqref{A3faible} is the same as~\cite[Assumption~(A4)]{BCGM} except that in~\eqref{A3faible} the coupling measure depends on $x,x'$.
The proof in~\cite{BCGM} can be adapted to this slight variation by: replacing the quantity $\nu(M_{k\tau}\psi/\psi)$ by $\inf_K(M_{k\tau}\psi/\psi)$ in the definition of the Lyapunov functions $V_k$ and using accordingly~\eqref{A4fort} instead of~\cite[Assumption~(A4)]{BCGM}; letting $\nu$ depend on $x,x'$ in the proof of~\cite[Lemma~3.2]{BCGM} and using~\eqref{A3faible} instead of~\cite[Assumption~(A3)]{BCGM}

It thus only remains to check that Hypothesis~\ref{ass:local} implies~\eqref{A4fort} and~\eqref{A3faible}, and that Hypothesis~\ref{ass:global} implies the same inequalities with $K=\X$ and $\psi=\1$.
Since the proof for Hypothesis~\ref{ass:global} is simpler, we only give it for Hypothesis~\ref{ass:local} (replace below $K$ by $\X$ and $\psi$ by $\1$ for Hypothesis~\ref{ass:global}).

\medskip

By virtue of Assumption~\ref{A0}, the condition~\eqref{A4fort} is clearly verified for $t\in[0,\tau]$.
When $t>\tau$, using $f=M_{t-\tau}\psi$ in~\ref{H:irrloc} ensures that for all $x,y\in K$
\begin{equation}\label{eq:step1}
\frac{M_t\psi(x)}{\psi(x)} \geq c \int_0^\tau \frac{M_{t-s}\psi (y)}{\psi(y)} \sigma_{x,y}(ds).
\end{equation}
By Assumption~\ref{A0}, there exists a constant $C>0$ such that for all $y\in\X$, $t\geq0$ and $s\in[0,\tau]$
\begin{equation}\label{eq:step2}
\frac{M_t\psi(y)}{M_{t-s}\psi(y)}\leq C\frac{M_{k\tau}V(y)}{M_{k\tau}\psi(y)},
\end{equation}
where $k=\big\lfloor\frac{t-s}{\tau}\big\rfloor$.
Using Assumptions~\ref{A1} and~\ref{A2} we can prove as in~\cite[Lemma~4.1]{BCGM} that for all $y\in\X$
\begin{equation}\label{eq:step3}
\frac{M_{k\tau}V(y)}{M_{k\tau}\psi(y)}\leq\left(\frac{\alpha}{\beta}\right)^{\!k}\frac{V(y)}{\psi(y)}+\frac{\theta}{\beta-\alpha}.
\end{equation}
Since $\beta>\alpha$ and $V\lesssim\psi$ on $K$ by~\ref{A0}, we deduce from~\eqref{eq:step1}, \eqref{eq:step2} and~\eqref{eq:step3} that Condition~\eqref{A4fort} is verified for a constant $d>0$ independent of $x,y\in K$ and $t\geq0$.

We turn now to~\eqref{A3faible}.
Due to~\ref{A0}, there exists $C>0$ such that $M_\tau\psi\leq C\psi$ on $K$.
Hypothesis~\ref{H:irrloc} then yields that for every $x,y\in K$
\begin{equation}
\label{eq:irr-hom}
\frac{\delta_x M_{\tau} (\psi \,\cdot)}{M_\tau\psi(x)} \geq \frac{c}{C} \int_0^\tau \frac{\delta_y M_{\tau-s} (\psi\,\cdot)}{\psi(y)} \,\sigma_{x,y}(ds).
\end{equation}
By Hypothesis~\ref{H:aperloc} there exists $\epsilon>0$ such that for all $x,x'\in K$ we can find $y\in K$ such that
$$ 
m_{x,x',y}:=\int_0^\tau\frac{M_{\tau-s}(\psi\1_K)(y)}{\psi(y)}(\sigma_{x,y}\wedge\sigma_{x',y})(ds)\geq\epsilon.
$$
Setting
\[\nu_{x,x'}=\frac{1}{m_{x,x',y}}\int_0^\tau\frac{M_{\tau-s}(\psi\1_K\,\cdot)(y)}{\psi(y)}(\sigma_{x,y}\wedge\sigma_{x',y})(ds),\]
we deduce from~\eqref{eq:irr-hom} that~\eqref{A3faible} is satisfied with $\tilde c=c\epsilon/C$.

\end{proof}

\

\section{An application in population dynamics}
\label{sec:app}

We apply our method to the following nonlocal equation with drift:
\begin{equation}\label{eq:nonlocal+drift}
\partial_tu(t,x)+\partial_xu(t,x)=\int_\R u(t,y)Q(y,dx)\,dy+a(x)u(t,x),\qquad x\in\R.
\end{equation}
The solutions describe the density of traits in a population with nonlocal mutation in a space-time varying environment, encoding for instance the influence of a climate change through a spatial shift of the coefficient, see \cite{BDNZ,CH19} for motivations.
In the recent work~\cite{CH19}, Coville and Hamel address the Perron spectral problem associated to Equation~\eqref{eq:nonlocal+drift} in a slightly more general case, namely with a possibly non constant transport speed $q(x)$ and on a domain $\Omega$ which can be an open subinterval of $\R$.
In the present paper we choose to limit ourselves to the case $q\equiv1$ and $\Omega=\R$ in order to remain concise.
However, the method can be applied to other situations: to the price of additional technicalities for $q\not\equiv 1$ since the flow of the transport part may not be explicit,
or on the contrary with less calculations when $\Omega=(r_1,r_2)$ is a bounded interval ($-\infty<r_1<r_2<+\infty$) since no Lyapunov condition is needed in that case ({\it i.e.} Hypothesis~\ref{ass:global} is verified).

\

We assume that $a:\R\to\R$ is a continuous function such that

\begin{equation}
\label{eq:hypa}
\bar a=\sup_{x\in\R}a(x)<+\infty\qquad\text{and}\qquad \lim_{x\to\pm\infty}a(x)=-\infty
\end{equation}

\medskip

\noindent and $x\mapsto Q(x,\cdot)$ is a weak* continuous function $\R\to\M_+(\1)$ which satisfies

\begin{equation}
\label{eq:hypQmin}
\exists\,\epsilon,\kappa_0>0,\forall x\in\R,\quad Q(x,dy)\geq\kappa_0\1_{(x-\epsilon,x+\epsilon)}(y) dy
\end{equation}
\vskip-1.5mm
\begin{equation}
\label{eq:hypQmaj}
\bar Q=\sup_{x\in\R}Q(x,\R)<+\infty.
\end{equation}

\medskip

The assumptions on $Q$ are typically verified by a convolution kernel $Q(x,y)=J(x-y)$ with $J$ a finite positive measure such that $J(dz)\geq\kappa_0\1_{(-\epsilon,\epsilon)}(z)dz$.
They are less restrictive than in~\cite{CH19}, since we do not require the measures $Q(x,\cdot)$ to have a bounded and compactly supported Lebesgue density.
Our potential function $a$ satisfies a confining assumption $a(\pm\infty)=-\infty$ (no longer required if working in a bounded subinterval of $\R$), while it is supposed to be bounded in~\cite{CH19}.
The reason is that in~\cite{CH19}, only the existence of $(\lambda,h)$ is addressed, whereas we prove the existence of $(\lambda,h,\gamma)$ and the asynchronous exponential behavior of the solutions to the Cauchy problem.
The well-posedness of Equation~\eqref{eq:nonlocal+drift} completed with an initial condition $u(0,\cdot)=u_0$ in $\M(\1)$ or in the subspace $L^1(\R,dx)$ is a standard result.
Our method allows us to prove the following result stated in the $L^1$ framework, which is more usual in the partial differential equations community, but it is also valid for measure solutions by replacing the $L^1$ norm by the total variation norm.

\begin{theo}\label{th:application}
Under Assumptions~\eqref{eq:hypa},\eqref{eq:hypQmin} and \eqref{eq:hypQmaj}, there exist constants $C,\omega>0$ and a unique eigentriplet $(\lambda,\gamma,h)\in\R\times L^1\times C^1_b$ with $\gamma\geq0$, $h>0$ and $\int\!h\gamma  =\|h\|_\infty=1$, such that for any initial condition $u_0\in L^1(\R,dx)$ the corresponding solution $u(t,x)$
of Equation~\eqref{eq:nonlocal+drift} verifies
$$
\left\|u(t,\cdot) \e^{- \lambda t} - \Big(\int_\mathbb{R} h(x)u_0(x)\,dx\Big)\gamma \right\|_{L^1} \leq C \left\| u_0 \right\|_{L^1} \e^{-\omega t}.
$$
\end{theo}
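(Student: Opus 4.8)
\emph{Proof proposal (plan).} The plan is to realize the flow of \eqref{eq:nonlocal+drift} as a positive semigroup $(M_t)_{t\geq0}$ on a weighted measure space and to apply Theorem~\ref{th:main} through Hypothesis~\ref{ass:local} (when one works on a bounded interval the Lyapunov step below is unnecessary and one uses Hypothesis~\ref{ass:global}, which is the situation mentioned before the statement). Writing $q(x)=Q(x,\R)\le\bar Q$ and splitting the right-hand side of \eqref{eq:nonlocal+drift} as $-\partial_xu+\big(\int u(t,y)Q(y,\d x)\,\d y-q(x)u\big)+(a+q)u$, one gets the Duhamel / Feynman--Kac representation
\[
M_tf(x)=T_tf(x)+\int_0^tT_s\big(QM_{t-s}f\big)(x)\,\d s=\E_x\Big[f(Z_t)\,\e^{\int_0^t(a+q)(Z_r)\,\d r}\Big],
\]
with $T_tf(x)=f(x+t)\,\e^{\int_0^t a(x+r)\,\d r}$ the transport--potential semigroup and $Z$ the pure-jump process on $\R$ with unit rightward drift and jump kernel $Q$; this defines a positive semigroup on $\M(V)$ and $\B(V)$ with the required duality, and by the (standard) well-posedness $u(t,\cdot)\,\d x=(u_0\,\d x)M_t$. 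It then remains to verify Hypothesis~\ref{ass:local} and to transfer the conclusion of Theorem~\ref{th:main} to $L^1$.

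\emph{Weights and Lyapunov conditions.} A natural choice is $\psi(x)=\e^{-\int_0^x a(r)\,\d r}$: a direct computation on the no-jump event gives $T_t\psi=\psi$, hence $M_t\psi\ge\psi$ for every $t\ge0$, so that $M\psi\gtrsim\psi$ on $[0,\tau]\times\R$ and \ref{A2} holds with $\beta\ge1$. Because $a(\pm\infty)=-\infty$, $\psi$ grows at $+\infty$ and decays at $-\infty$; for $V$ one takes a majorant of $\psi$ of comparable exponential type, tuned to the growth of $a$ so that the Lyapunov bound $MV\lesssim V$ on $[0,\tau]\times\R$ holds (here $\bar Q<\infty$ is what controls the nonlocal contribution), with $V\asymp\psi$ on a compact $K=[-R,R]$; this gives the remaining parts of \ref{A0}. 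For \ref{A1} one iterates Duhamel once and uses $M_t\1\le\e^{(\bar a+\bar Q)t}$: since $a(\pm\infty)=-\infty$ while $V$ and $\psi$ are of the same type, $M_\tau V/V\le\alpha$ outside a large enough $K$ with $\alpha<\beta$, whereas on $K$ everything is bounded and absorbed into $\theta\1_K\psi$.

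\emph{Path-crossing and aperiodicity on $K$ — the main step.} This is the heart of the proof and where the mechanism of the paper is used. Fix $\tau>0$. For $x,y\in K$ I would iterate the Duhamel formula $n\sim\lceil 2R/\epsilon\rceil$ times, discarding the no-jump terms and bounding $Q(z,\d z')\ge\kappa_0\1_{(z-\epsilon,z+\epsilon)}(z')\,\d z'$ at each step; the intermediate positions remain in a fixed bounded set, where the weights $\e^{\int(a+q)}$ and $\psi$ are bounded above and below, so $M_\tau f(x)$ is bounded below by a positive multiple of an integral of $M_{\tau-s}f(z_n)$ over a tube of admissible jump times and positions. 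Since $n\epsilon$ exceeds the diameter of $K$ plus the drift displacement $\tau$, this tube can be steered to reach, at a freely chosen time $s\in(0,\tau)$, any prescribed point of $K$, and one obtains, uniformly in $x,y\in K$,
\[
\delta_xM_\tau(\cdot)\ge c\int_0^\tau\delta_yM_{\tau-s}(\cdot)\,\sigma_{x,y}(\d s)
\]
as measures, with $\sigma_{x,y}$ absolutely continuous and with density bounded below on a common interval $[s_0,s_1]\subset(0,\tau)$ independent of $x,y$ — exactly the strong-Markov picture for the hitting time of $y$ described after Hypothesis~\ref{ass:global}, the randomness of the jump times spreading the crossing time. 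Dividing by $\psi\asymp1$ on $K$ yields \ref{H:irrloc}; and since $\sigma_{x,y}\wedge\sigma_{x',y}$ has positive mass on $[s_0,s_1]$ for any $y\in K$, while $M_{\tau-s}(\psi\1_K)(y)>0$ for every $s\in(0,\tau)$ and $y\in K$ (the trajectory from $y$ stays in $K$ with positive probability), is continuous, hence bounded below on compacts, \ref{H:aperloc} follows. I expect this uniform-in-$K$ construction of $(\sigma_{x,y})$ to be the principal obstacle.

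\emph{Conclusion.} Theorem~\ref{th:main} now provides $\lambda\in\R$, $\gamma\in\M_+(V)$, $h\in\B_+(V)$ with $\gamma(h)=\|h\|_{\B(V)}=1$ and the convergence \eqref{eq:conv_norm}; for $u_0\,\d x\in\M(V)$ this is the stated estimate with $\int hu_0$ in place of $\mu(h)$, and the extension to all of $L^1$ (resp. $\M(\1)$) follows from a density argument — compactly supported functions are in $\M(V)$ and dense in $L^1$ — combined with the $L^1$-boundedness $\|M_t\|_{L^1\to L^1}\le\e^{(\bar a+\bar Q)t}$ and the regularization of the solution into $\M(V)$ in positive time. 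It then remains to upgrade the eigenelements: boundedness and decay of $h$ at $\pm\infty$ come from the stationary equation $\partial_xh=(\lambda-a)h-Qh$ together with the confinement $a(\pm\infty)=-\infty$ (the blow-up of $\lambda-a$ forcing $h\to0$), whence a bootstrap using the weak$^*$ continuity of $Q$ gives $h\in C^1_b$, and $h>0$ follows from \ref{H:irrloc}; finally $\gamma$ is absolutely continuous because the singular part of $\delta_yM_\tau$ is the atom $\e^{\int_0^\tau a(y+r)\,\d r}\delta_{y+\tau}$ (up to possible atoms of $Q$, handled similarly), so iterating $\gamma M_\tau=\e^{\lambda\tau}\gamma$ gives $\|\gamma_s\|_{\mathrm{TV}}\le\e^{C-n\tau}\|\gamma_s\|_{\mathrm{TV}}$ for all $n$, forcing $\gamma_s=0$.
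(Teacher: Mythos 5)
Your overall strategy (Duhamel representation, verification of Hypothesis~\ref{ass:local}, application of Theorem~\ref{th:main}) is the paper's, and your treatment of the path-crossing step is essentially the argument used there: the paper iterates the Duhamel formula $n$ times with the kernel lower bound~\eqref{eq:hypQmin}, building by induction measures $\sigma_{x,y}^{t,n}$ with positive densities depending continuously on $(x,y,t,s)$, so that after $n\sim\mathrm{diam}(K)/\epsilon$ jumps any $y$ in the reachable window is attained and compactness gives the uniform constants; your ``tube of admissible jump times and positions'' is the same mechanism, and your verification of~\ref{H:aperloc} from the positivity of the common density is also how the paper proceeds (it uses Lemma~\ref{lem:positivity} to get $M_{\tau-s}(\psi\1_K)(y)>0$, since the drifting trajectory may exit $K$).

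The genuine gap is in your choice of weights. Taking $\psi(x)=\e^{-\int_0^xa}$ forces $V\geq\psi$ to grow superexponentially at $+\infty$ (since $a(+\infty)=-\infty$), and then $MV\lesssim V$ in~\ref{A0} fails: the hypotheses only bound $Q(x,\R)$, not $\int V\,dQ(x,\cdot)$, so the nonlocal term can send mass to regions where $V/V(x)$ is unbounded — and even for a compactly supported convolution kernel one already has $V(x+\epsilon)/V(x)=\e^{-\int_x^{x+\epsilon}a}\to\infty$, so $\L V\lesssim V$ is false. Moreover, even granting the Lyapunov step, Theorem~\ref{th:main} with an unbounded $V$ gives convergence in $\M(V)\subsetneq\M(\1)$ with constant $C\|\mu\|_{\M(V)}$, and your proposed descent to $L^1$ by density does not work: the error $\e^{-\lambda t}(u_0-u_0^n)M_t$ is only controlled by $\e^{(\bar a+\bar Q-\lambda)t}\|u_0-u_0^n\|_{L^1}$, which need not decay, and there is no regularization of $\M(\1)$ into $\M(V)$ (the transport part preserves $\int\psi\,d|\mu|$ exactly for your $\psi$, so $u_0M_t\notin\M(V)$ whenever $u_0\notin\M(V)$). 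The paper avoids both problems by taking $V=\1$ — so that $\M(V)=L^1+$singular and the $L^1$ estimate with constant $C\|u_0\|_{L^1}$ is immediate — and a \emph{compactly supported} bump $\psi_0=((1-(x/x_0)^2)_+)^2$, proving the generator inequalities $\L V\leq\alpha_0V+\theta_0\psi_0$ and $\L\psi_0\geq\beta_0\psi_0$ for $x_0$ large, then replacing $\psi_0$ by $\psi=\zeta^{-1}M_\tau\psi_0>0$ and taking $K=\{V\leq R\psi\}$, which is bounded because $\psi\to0$ at $\pm\infty$. Your final remarks on the regularity of $\gamma$ are also shakier than needed (your contraction of the singular part implicitly requires $\lambda>\bar a$); the paper simply uses the invariance of $L^1$ under the semigroup together with the convergence in total variation.
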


\

The method of proof consists in applying our general result in Theorem~\ref{th:main} to the semigroup $(M_t)_{t\geq0}$ associated to Equation~\eqref{eq:nonlocal+drift}, which is defined through the Duhamel formula
\begin{equation}\label{eq:Duhamel}
M_tf(x)=f(x+t)\e^{\int_x^{x+t} a(s)ds}+\int_0^t \e^{\int_x^{x+s} a(s')ds'}\int_\R M_{t-s}f(y)Q(x+s,dy)\,ds.
\end{equation}
It is a standard result, see for instance~\cite{BCG17,BCGM} for more details on a similar model, that the Duhamel formula~\eqref{eq:Duhamel} indeed defines a positive semigroup $(M_t)_{t\geq0}$ in $\B(\1)$, but also in $C_b(\R)$ and in $C^1_b(\R)$.
When $f\in C^1_b(\R)$ it additionally holds that $(t,x)\mapsto M_tf(x)$ is continuously differentiable and satisfies
\[\partial_tM_tf=\L M_tf=M_t\L f,\]
where
\[\L f(x)=f'(x)+a(x)f(x)+\int_\R f(y)Q(x,dy).\]
This right action of the semigroup provides the unique solutions to the dual equation
\[\partial_t\varphi(t,x)=\partial_x\varphi(t,x)+a(x)\varphi(t,x)+\int_\R \varphi(t,y)Q(x,dy)\]
and thus the left action, defined by duality through $(\mu M_t)(f)=\mu(M_tf)$, yields the unique measure solutions to the direct equation~\eqref{eq:nonlocal+drift}.
The well-posedness of the direct equation in $L^1(\R,dx)$ ensures that if $\mu$ has a Lebesgue density, then so does $\mu M_t$ for all $t\geq0$.

\medskip

For applying Theorem~\ref{th:main}, we check that $(M_t)_{t\geq0}$ verifies Hypothesis~\ref{ass:local}.
Equation~\eqref{eq:nonlocal+drift} is a nice example to illustrate the conditions~\ref{H:irrloc} and~\ref{H:aperloc}, as we will explain now.
The drift term, which represents the spatial shift of the space-time varying environment, is crucial for verifying~\ref{H:irrloc}.
If we delete it, the situation is more involved: the result of Theorem~\ref{th:application} is still true when typically $1/(\bar a-a)$ is not locally integrable~\cite{AGK,Burger88,Cov10,LCW}, otherwise it may not hold due to the presence of a Dirac mass in $\gamma$, see~\cite{BurgerBomze,Cov13}.
Assumption~\eqref{eq:hypQmin} is crucial for verifying~\ref{H:aperloc}.
If we consider for instance the singular kernel $Q(x,\cdot)=\delta_{x-1}+\delta_{x+1}$, then the convergence in Theorem~\ref{th:application} does not hold and a periodic asymptotic behavior takes place, similarly as in~\cite{BDG,GM19}.

\medskip

Before proving Theorem~\ref{th:application}, with start with a useful strong positivity result about $M_t$.

\begin{lem}\label{lem:positivity}
Let $x_1,x_2\in\R$ with $x_1<x_2$. Then for any $y_1,y_2\in\R$ with $y_1<y_2$ and any $\tau>0$, there exists $\eta>0$ such that 
\[M_\tau\1_{[x_1,x_2]}\geq\eta\1_{[y_1,y_2]}.\]
\end{lem}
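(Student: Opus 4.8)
The plan is to exploit the Duhamel formula~\eqref{eq:Duhamel}, which expresses $M_\tau f$ as a sum of a nonnegative transport term and a nonnegative ``mutation'' term, and to extract from the mutation term, iterated finitely many times, a lower bound of the desired form. The key structural facts are: (i) both terms in~\eqref{eq:Duhamel} are nonnegative when $f\geq 0$, so we may throw away the transport term and keep only the mutation contribution; (ii) the exponential weights $\e^{\int_x^{x+s}a(s')ds'}$ are bounded below by a positive constant on any compact set of $(x,s)$, by continuity of $a$; and (iii) Assumption~\eqref{eq:hypQmin} gives $Q(x,dy)\geq\kappa_0\1_{(x-\eps,x+\eps)}(y)\,dy$, i.e.\ in one ``mutation step'' the process spreads mass a distance at least $\eps$ in space, uniformly in $x$ over compacts.

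First I would reduce to proving a one-mutation-step estimate: there is a constant $\kappa>0$ (depending on the compact sets involved and on $\tau$) such that for all $z$ in a suitable compact interval and all $t\in[0,\tau]$,
\[
M_t\1_{[x_1,x_2]}(z)\ \geq\ \kappa\,\big|[z-t\,?,\,z+\,?]\cap\dots\big|,
\]
more precisely that starting from a point $z$, after time $t$ the transported position is $z+t$ and one mutation jump of size up to $\eps$ lands in $[x_1,x_2]$ provided $z+t$ is within $\eps$ of $[x_1,x_2]$; keeping only the first mutation term in~\eqref{eq:Duhamel} and using $M_{t-s}\1_{[x_1,x_2]}(y)\geq \1_{[x_1,x_2]}(y)\cdot(\text{lower bound on }M_{t-s}\1_{[x_1,x_2]})$ — here one uses that $M_r g\geq \e^{r\inf a}\,g(\cdot+r)$ on the relevant compact, i.e.\ the transport term alone already gives a crude lower bound $M_{t-s}\1_{[x_1,x_2]}(y)\geq c_0\1_{[x_1+s-t,\,x_2+s-t]}(y)$ for a constant $c_0>0$. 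Composing, $M_t\1_{[x_1,x_2]}(z)$ is bounded below by a positive constant times the Lebesgue measure of the set of $s\in[0,t]$ such that $z+s$ lands (after a $\le\eps$ jump and free transport for time $t-s$) inside $[x_1,x_2]$; this set has positive length, uniformly, as long as $z$ is within distance $t+\eps$ of $[x_1-t,x_2]$, which after choosing $t=\tau$ covers an interval around $[y_1,y_2]$ once we observe we only need the conclusion on $[y_1,y_2]$ and we are free to first shrink $\tau$ or, if $\tau$ is too small to reach, to iterate. The honest way to handle an arbitrary target $[y_1,y_2]$ and arbitrary $\tau>0$ is an induction on the number $n$ of mutation steps: after $n$ steps the reachable spatial displacement from a free-transport baseline is an interval of half-width $n\eps$, so choosing $n$ with $n\eps$ large enough that $[y_1,y_2]$ lies within the $n$-step reachable set from $[x_1,x_2]$ (shifted by the transport over time $\tau$), and distributing the total time $\tau$ among the $n$ mutation epochs with positive Lebesgue volume of admissible configurations, yields the bound. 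I would set this up by defining $\phi_n(z)$ as the $n$-fold mutation iterate of~\eqref{eq:Duhamel} restricted to its mutation term, proving by induction that $\phi_n\gtrsim \1_{[y_1^{(n)},y_2^{(n)}]}$ on a compact, with the interval widening by $\eps$ at each step, all constants uniform because $a$ and the lower bound $\kappa_0$ are controlled on compacts.

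The main obstacle, and the only place requiring care, is the bookkeeping of the reachable region: one must check that the convolution-type spreading from $Q$ actually fills an interval (not just translates it), which is exactly where $\1_{(x-\eps,x+\eps)}$ with an \emph{open} symmetric interval matters — a single jump of \emph{any} size in $(-\eps,\eps)$ is available, so the $n$-step reachable set is an honest interval $[-n\eps,n\eps]$ of displacements relative to pure transport, with an associated positive-Lebesgue-measure set of $(s_1,\dots,s_n)$ time-allocations and jump sizes realizing each target. Once that geometric claim is in place, combining it with the uniform positive lower bounds on the exponential weights on the relevant compact and with $\kappa_0>0$ gives the constant $\eta>0$, completing the proof. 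Everything else — measurability, Fubini to integrate the Duhamel iterates, local boundedness of $t\mapsto\|M_t\1\|_\infty$ — is routine and follows from the standard well-posedness recalled before the lemma.
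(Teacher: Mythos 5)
Your proposal is correct and follows essentially the same strategy as the paper: keep the mutation term of the Duhamel formula, bound $Q$ below by $\kappa_0\1_{(x-\epsilon,x+\epsilon)}(y)\,dy$ and the exponential weights below on compacts, and iterate $n$ times so that the reachable interval widens by a fixed amount per mutation step until it covers $[y_1,y_2]$. The only (cosmetic) differences are that the paper organizes the iteration through the semigroup property $M_\tau=(M_{\tau/n})^n$ with a single one-jump estimate per factor, rather than an $n$-fold nested Duhamel expansion, and it concedes $\epsilon/2$ rather than $\epsilon$ of widening per step so that the lower bound on the overlap with $[x_1,x_2]$ stays uniformly positive -- a harmless adjustment since $n$ is arbitrary.
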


\begin{proof}[Proof of Lemma~\ref{lem:positivity}]
Iterating once the Duhamel formula~\eqref{eq:Duhamel} we get for all $t>0$
\begin{align*}
M_t\1_{[x_1,x_2]}(x)&\geq \e^{t\inf_{(x,x+t)}\! a}\int_0^t \int_\R \1_{[x_1,x_2]}(y+t-s)Q(x+s,dy)ds\\
&\geq \kappa_0t\,\e^{t\inf_{(x,x+t)}\! a} \int_{x+t-\epsilon}^{x+t+\epsilon} \1_{[x_1,x_2]}(z)dz
\end{align*}
from which we deduce that $M_t\1_{[x_1,x_2]}(x)\geq const>0$ for all $x\in[x_1-t-\epsilon/2,x_2-t+\epsilon/2]$.
Letting $n$ be a positive integer and considering $t=\tau/n$ we get that $M_t\1_{[x_1,x_2]}(x)\geq c_0>0$ for all $x\in[x_1-\tau/n-\epsilon/2,x_2-\tau/n+\epsilon/2]$,
and by iteration $M_\tau\1_{[x_1,x_2]}\geq c_0^n\1_{[x_1-\tau-n\epsilon/2,x_2-\tau+n\epsilon/2]}$.
The conclusion follows by choosing $n$ large enough.
\end{proof}

\begin{proof}[Proof of Theorem~\ref{th:application}]
We proceed in two steps.
First we check that on any bounded set $K$, Hypotheses~\ref{H:irrloc} and~\ref{H:aperloc} are satisfied for any $\tau>0$ (and any function $\psi$ bounded from above and below by a positive constant over $K$).
Then for Assumptions~\ref{A0}-\ref{A1}-\ref{A2}, we use $V=\1$ and build a suitable function $\psi$ such that the sublevel sets of $V/\psi$ are bounded.

\medskip

\paragraph{\bf Step \#1}

Let $K$ be a bounded set and $\tau\in(0,\epsilon/2)$.
We will build by induction two families $(\sigma_{x,y}^{t,n})$ and $(c_{x,y}^{t,n})$ indexed by $n\in\N$, $t\in[0,\tau]$, $x\in\R$ and $y\in[x+t-n\epsilon/2,x+t+n\epsilon/2]$ such that:
$\sigma_{x,y}^{t,n}$ is a probability measure on $[0,t]$ which has a positive Lebesgue density $\mathfrak s_{x,y}^{t,n}$ when $n\geq1$, $c_{x,y}^{t,n}$ is a positive constant, $(x,y,t,s)\mapsto \mathfrak s_{x,y}^{t,n}(s)$ and $(x,y,t)\mapsto c_{x,y}^{t,n}$ are continuous functions, and for all $f\geq0$
\begin{equation}\label{irr_n}
M_tf(x)\geq c_{x,y}^{t,n}\int_0^t M_{t-s}f(y)\,\sigma_{x,y}^{t,n}(ds).
\end{equation}
The integer $n$ essentially represents a number of jumps required for reaching $y$ by starting from~$x$.
Once these families are built, we choose $n$ large enough so that $K\subset [x+\tau-n\epsilon/2,x+\tau+n\epsilon/2]$ for every $x\in K$.
Positivity and continuity of $(x,y,t,s)\mapsto \mathfrak s_{x,y}^{t,n}(s)$ and $(x,y,t)\mapsto c_{x,y}^{t,n}$ then guarantee, together with Lemma~\ref{lem:positivity}, that Hypotheses~\ref{H:irrloc} and~\ref{H:aperloc} are satisfied with
\[\sigma_{x,y}=\sigma_{x,y}^{\tau,n}\qquad\text{and}\qquad c=\inf_{x,y\in K}c_{x,y}^{\tau,n}\]
for any function $\psi$ bounded from above and below by a positive constant on $K$.
Clearly if \ref{H:irrloc} and~\ref{H:aperloc} are satisfied for any $\tau\in(0,\epsilon/2)$, they are also satisfied for any $\tau>0$.
Let us now give the details of the induction.

\smallskip

For $n=0$, $y=x+t\geq x$ and the Duhamel formula~\eqref{eq:Duhamel} ensures that for any $f\geq0$
\begin{equation}\label{eq:n=0}
M_t f(x) \geq f(y) \,\e^{\int_x^{y} a(s) ds}.
\end{equation}
This transcribes that $y$ can be reached from $x$ by only following the drift, without doing any jump,
and it gives~\eqref{irr_n} with $\sigma_{x,y}^{t,0}=\delta_t$ and $c_{x,y}^{t,0}=\e^{\int_x^{y} a(s) ds}$.

\smallskip

For $n=1$, we start by noting that replacing $f$ by $M_{t' -(y-x)} f$ in~\eqref{eq:n=0} gives
\[M_{t'} f(x) \geq  \e^{\int_x^{y} a(s) ds} M_{t' -(y-x)} f(y)\]
for any $t'\geq y-x\geq0$.
Using this inequality with $t'=t-s$ and $x=z$ together with Assumption~\eqref{eq:hypQmin} in the Duhamel formula~\eqref{eq:Duhamel} we get for all $y\in [x+t-\epsilon/2, x+t+\epsilon/2]$, $t\in[0,\tau]$,
\begin{align*}
M_t f(x) 
&\geq \int_0^t \e^{\int_x^{x+s} a(s')ds'}\int_\R M_{t-s}f(z)Q(x+s,dz)\,ds \\
&\geq \kappa_0 \int_0^t \e^{\int_x^{x+s}\!a}\int_{x+s-\epsilon}^{x+s+\epsilon} M_{t-s}f(z)dz \,ds\\
&\geq \kappa_0 \int_0^t \e^{\int_x^{x+s}\!a}\int_{y+s-t}^{y} \e^{\int_z^y\!a}\, M_{t-s - (y-z)}f(y) dz \,ds\\
&\geq \kappa_0 \int_0^t M_{t-s'}f(y)\bigg(\int_0^{s'}\e^{\int_x^{x+s}\!a+\int_{y+s-s'}^y\!a}ds\bigg)ds'.
\end{align*}
This gives~\eqref{irr_n} when defining
\[c_{x,y}^{t,1}=\kappa_0 \int_0^t \int_0^{s}\e^{\int_x^{x+s'}\!a+\int_{y+s'-s}^y\!a}ds'ds>0\]
and $\sigma_{x,y}^{t,1}(ds)=\mathfrak s_{x,y}^{t,1}(s)ds$ with
\[\mathfrak s_{x,y}^{t,1}(s)=\frac{\kappa_0}{c_{x,y}^{t,1}} \int_0^s\e^{\int_x^{x+s'}\!a+\int_{y+s'-s}^y\!a}ds'>0.\]

\smallskip

For $n\to n+1$, the induction hypothesis ensures that for any $0\leq s\leq t\leq\tau$ and $z\in[y-t+s-n\epsilon/2,y-t+s+n\epsilon/2]$
\[M_{t-s}f(z)\geq c_{z,y}^{t-s,n}\int_s^t M_{t-s'}f(y)\,\mathfrak s_{z,y}^{t-s,n}(s'-s)ds'.\]
Injecting this inequality in the Duhamel formula~\eqref{eq:Duhamel} yields that for all $y\in[x+t-(n+1)\epsilon/2,x+t+(n+1)\epsilon/2]$, $t\in[0,\tau]$,
\begin{align*}
M_t f(x) 
&\geq \kappa_0 \int_0^t \e^{\int_x^{x+s}\!a}\int_{x+s-\epsilon}^{x+s+\epsilon} M_{t-s}f(z)dz \,ds\\
&\geq \kappa_0 \int_0^t \e^{\int_x^{x+s}\!a}\int_{\max(x+s-\epsilon,y-t+s-n\epsilon/2)}^{\min(x+s+\epsilon,y-t+s+n\epsilon/2)}  c_{z,y}^{t-s,n}\int_s^t M_{t-s'}f(y)\,\mathfrak s_{z,y}^{t-s,n}(s'-s)ds'dz \,ds\\
&\geq \kappa_0 \int_0^t M_{t-s'}f(y)\bigg(\int_0^{s'} \e^{\int_x^{x+s}\!a}\int_{\max(x+s-\epsilon,y-t+s-n\epsilon/2)}^{\min(x+s+\epsilon,y-t+s+n\epsilon/2)}  c_{z,y}^{t-s,n} \mathfrak s_{z,y}^{t-s,n}(s'-s)dz\,ds\bigg)ds',
\end{align*}
which gives~\eqref{irr_n} for $n+1$ with
\[c_{x,y}^{t,n+1}=\kappa_0\int_0^t \int_0^{s} \e^{\int_x^{x+s'}\!a}\int_{\max(x+s'-\epsilon,y-t+s'-n\epsilon/2)}^{\min(x+s'+\epsilon,y-t+s'+n\epsilon/2)}  c_{z,y}^{t-s',n} \mathfrak s_{z,y}^{t-s',n}(s-s')dzds'ds>0\]
and $\sigma_{x,y}^{t,n+1}(ds)=\mathfrak s_{x,y}^{t,n+1}(s)ds$ with
\[\mathfrak s_{x,y}^{t,n+1}(s)=\frac{\kappa_0}{c_{x,y}^{t,n+1}}\int_0^{s} \e^{\int_x^{x+s'}\!a}\int_{\max(x+s'-\epsilon,y-t+s'-n\epsilon/2)}^{\min(x+s'+\epsilon,y-t+s'+n\epsilon/2)}  c_{z,y}^{t-s',n} \mathfrak s_{z,y}^{t-s',n}(s-s')dzds'>0.\]

\medskip

\paragraph{\bf Step \#2}
Let us fix $\tau>0$.
We use drift conditions, similarly as in~\cite[Section 2.2]{BCGM}.
More precisely, we will prove that the functions
\[V=\1\qquad \text{and}\qquad \psi_0(x)=((1-(x/x_0)^2)_+)^2\]
satisfy
\[\L V\leq\alpha_0 V+\theta_0\psi_0\qquad\text{and}\qquad\L\psi_0\geq\beta_0\psi_0\]
for some $x_0\geq\epsilon$ and some constants $\alpha_0=\beta_0-1$ and $\theta_0>0$.
From these drift conditions, we get that the function $\phi=V-\theta_0\psi_0$ verifies
\[\L\phi\leq\alpha_0 V+\theta_0\psi_0-\theta_0\beta_0\psi_0=\alpha_0\phi\]
and, since $\partial_tM_t\phi=M_t\L\phi$, we deduce from Grönwall's lemma that $M_\tau\phi\leq\e^{\alpha_0\tau}\phi$, which gives
\[M_\tau V\leq \e^{\alpha_0\tau}\left(V-\theta_0 \psi_0\right)+ \theta_0 M_\tau\psi_0\leq\e^{\alpha_0\tau}V+ \theta_0 M_\tau\psi_0.\]
We then prove the existence of $\zeta>0$ such that $M_\tau\psi_0\leq\zeta V$ and we set $\psi=\zeta^{-1} M_\tau\psi_0$, so that $\psi>0$ due to Lemma~\ref{lem:positivity}, $\psi\leq V$, and
\begin{equation}\label{eq:presqueA1}M_\tau V\leq\e^{\alpha_0\tau}V+ \theta_0 \zeta\psi.
\end{equation}
Besides, $\L\psi_0\geq\beta_0\psi_0$ yields $M_{2\tau}\psi_0\geq \e^{\beta_0\tau}M_\tau\psi_0$, which also reads
\[M_\tau\psi\geq\e^{\beta_0\tau}\psi.\]
Choosing
\[R>\frac{\theta_0\zeta}{ \e^{\alpha_0\tau}(\e^\tau-1)}\qquad\text{and}\qquad K=\{V\leq R\psi\}\]
we obtain from~\eqref{eq:presqueA1} that
\[M_\tau V\leq\Big(\e^{\alpha_0\tau}+\frac{\theta_0\zeta}{R }\Big)V+ \theta_0\zeta\1_K\psi\]
and Assumptions~\ref{A1} and~\ref{A2} are verified with $\theta=\theta_0\zeta$ and
\[\alpha=\e^{\alpha_0\tau}+\frac{\theta_0\zeta}{R}<\e^{(\alpha_0+1)\tau}=\e^{\beta_0\tau}=\beta.\]
Finally we check that $\{V\leq R\psi\}$ is bounded, which ends the verification of Assumptions~\ref{A0}-\ref{A1}-\ref{A2}.

\smallskip

Now we show that $\L\psi_0\geq\beta_0\psi_0$ for some $\beta_0$. We have
\begin{align*}
\L\psi_0(x)&=-4\frac{x}{x_0^2}(1-(x/x_0)^2)_++a(x)\psi_0(x)+\int_{-x_0}^{x_0}{(1-(y/x_0)^2)}^2Q(x,dy)\\
&\geq -\frac{4}{x_0}(1-(x/x_0)^2)_++\Big[\inf_{(-x_0,x_0)}\!a\Big] \psi_0(x)+\kappa_0x_0\int_{-1}^{1}{(1-y^2)}^2\1_{(x-\epsilon,x+\epsilon)}(x_0y)\, dy\\
&\geq -\frac{4}{x_0}(1-(x/x_0)^2)_++\Big[\inf_{(-x_0,x_0)}\!a\Big] \psi_0(x)+\kappa_0\1_{(-x_0,x_0)}(x)x_0\int_{1-\epsilon/x_0}^1(1-y)^2(1+y)^2\,dy\\
&\geq -\frac{4}{x_0}(1-(x/x_0)^2)_++\Big[\inf_{(-x_0,x_0)}\!a\Big] \psi_0(x)+\frac{8\kappa_0\epsilon^3}{15\,x_0^2}\1_{(-x_0,x_0)}(x),
\end{align*}
since for $r\in[0,1]$ we have $\int_{1-r}^1(1-y)^2(1+y)^2\,dy=\int_0^r z^2(2-z)^2dz=r^3\big(\frac43-r+\frac{r^2}{5}\big)\geq\frac{8r^3}{15}$.
If $2\kappa_0\epsilon^3\geq15 x_0$ we get
\[\L\psi_0\geq\Big[\inf_{(-x_0,x_0)}\!a\Big] \psi_0.\]
Otherwise we split, for $|x|\geq\sqrt{1-(2\kappa_0\epsilon^3)/(15x_0)}\,x_0$,
\[\L\psi_0(x)\geq\Big[\inf_{(-x_0,x_0)}\!a\Big] \psi_0(x),\]
and for $|x|\leq\sqrt{1-(2\kappa_0\epsilon^3)/(15x_0)}\,x_0$, we have $\sqrt{\psi_0(x)}\geq\frac{2\kappa_0\epsilon^3}{15x_0}$ which yields $\sqrt{\psi_0(x)}\leq\frac{15x_0}{2\kappa_0\epsilon^3}\psi_0(x)$ and thus
\[\L\psi_0(x)\geq\bigg[-\frac{30}{\kappa_0\epsilon^3}+\inf_{(-x_0,x_0)}\!a+\frac{8\kappa_0\epsilon^3}{15\,x_0^2}\bigg]\psi_0(x).\]
At the end, in any case and for any $x_0\geq\epsilon$,
\[\L\psi_0\geq\bigg[-\frac{30}{\kappa_0\epsilon^3}+\inf_{(-x_0,x_0)}\!a\bigg]\psi_0:=\beta_0\psi_0.\]

\smallskip

Next, for $V=\1$, we have
\[\L V(x)=a(x)+Q(x,\R)\leq a(x)+\overline Q.\]
Choosing $r_0>0$ such that $a(x)<-\bar Q+\beta_0-1:=-\bar Q+\alpha_0$ when $|x|>r_0$ we get
\[\L V(x)\leq\alpha_0V(x)+(\bar a+\bar Q)\1_{(-r_0,r_0)}(x).\]
So if we choose $x_0\geq\sqrt2r_0$ in the definition of $\psi_0$ we get $\1_{(-r_0,r_0)}\leq4\psi_0$ and thus
\[\L V\leq\alpha_0V+4(\bar a+\bar Q)\psi_0:=\alpha_0V+\theta_0\psi_0.\]
Besides, since $\L V\leq\bar a+\bar Q$, we have by Grönwall's lemma that
\[M_\tau\psi_0\leq M_\tau V\leq \e^{-(\bar a+\bar Q)\tau}V:=\zeta V.\]

\smallskip

Finally $K=\{V\leq R\psi\}$ is bounded because $\psi$ tends to zero at $\pm\infty$: from the Duhamel formula~\eqref{eq:Duhamel} we have
\[M_\tau\psi_0(x)\leq\psi_0(x+t)\e^{\int_x^{x+\tau} a(s)ds}+\bar Q\int_0^\tau \e^{\int_x^{x+s} a(s')ds'}\e^{(t-s)(\bar a+\bar Q)}ds\]
which yields the result since $\lim_{x\to\pm\infty}a(x)=-\infty$.

\medskip

\paragraph{\bf Conclusion}
We have checked Hypothesis~\ref{ass:local} so the result is a direct application of Theorem~\ref{th:main}, the regularity of $\gamma$ following from the convergence in total variation norm and the invariance of $L^1$ under the semigroup, and the regularity of $h$ being obtained by usual arguments from Equation~\eqref{eq:Duhamel}.
\end{proof}

\

\section*{Acknowledgments}

The authors are very grateful to Vincent Bansaye and Aline Marguet for useful discussions about the results of the present note.

B.C. has received the support of the Chair ``Mod\'elisation Math\'ematique et Biodiversit\'e'' of VEOLIA-Ecole Polytechni\-que-MnHn-FX
and the ANR project MESA (ANR-18-CE40-006), funded by the French Ministry of Research.


\end{document}